\documentclass{amsart}
\usepackage{lmodern}

\usepackage[latin9]{inputenc}
\usepackage{amsthm}
\usepackage{amssymb}
\usepackage[unicode=true,pdfusetitle,
 bookmarks=true,bookmarksnumbered=false,bookmarksopen=false,
 breaklinks=false,pdfborder={0 0 1},backref=section,colorlinks=false]
 {hyperref}
\hypersetup{
 bookmarks,colorlinks,pagebackref}

\makeatletter
\numberwithin{equation}{section}
\numberwithin{figure}{section}


\usepackage{bbm}\usepackage{tikz}

\usepackage{times}\usepackage{amsfonts}\usepackage{amscd}\usepackage{amsxtra}\usepackage{latexsym}\usepackage{multirow}\usepackage{amsthm}\usepackage{mathtools}\usepackage{cjhebrew}\input{xy}
\xyoption{all}

    \newcommand{\sbt}{\,\begin{picture}(-1,1)(-1,-3)\circle*{2}\end{picture}\ }
\newcommand{\sarc}{\mathrel{\ooalign{$\nabla$\cr
  \hidewidth\raise.3ex\hbox{$\sbt\mkern5mu$}\cr}}}

\newcommand{\nn}{\,\begin{picture}(-1,1)(-1,-3)\scalebox{.5}{n}\end{picture}\ }
\newcommand{\narc}{\mathrel{\ooalign{$\nabla$\cr
  \hidewidth\raise.05ex\hbox{$\nn\mkern7mu$}\cr}}}

\theoremstyle{plain}

\newtheorem*{conjectuur*}{Conjecture}

\swapnumbers
\newtheorem{theorem}[subsection]{Theorem}

\newtheorem{lemma}[subsection]{Lemma}
\newtheorem{proposition}[subsection]{Proposition}

\theoremstyle{definition}

\theoremstyle{remark}

\newtheorem{remark}[subsection]{Remark}

\swapnumbers

\newcommand{\emptyprop}{q}



\newcommand{\sX}[1]{\mathcal{X}_{#1}}
\newcommand{\sY}[1]{\mathcal{Y}_{#1}}
\newcommand{\sZ}[1]{\mathcal{Z}_{#1}}
\newcommand{\bA}{\mathbb{A}}

\newcommand{\bX}{\mathbb{X}}

\newcommand{\limsieves}[1]{\categ{LSieves}_{#1}}

\newcommand{\nmes}[2]{\categ{Ms}^{#2}\categ{Sieves}_{#1}}

\newcommand{\set}{\categ{Set}}

\newcommand{\nsieves}[2]{\categ{s}^{#2}\categ{Sieve}_{#1}}
\newcommand{\mot}{\text{\fontsize{16}{26}\selectfont\cjRL{M}}}

\newcommand{\mmot}{\categ{Mes}\text{\fontsize{16}{26}\selectfont\cjRL{M}}}

\newcommand{\sO}[1]{\mathcal{O}_{#1}}

\newcommand{\spec}[1]{\operatorname{Spec}(#1)}

\newcommand{\sS}[3]{\mathcal{S}_{#1}^{#2}{(#3)}}

\newcommand{\sC}[3]{\mathcal{C}_{#1}^{#2}{(#3)}}
\newcommand{\sT}[3]{\mathcal{T}_{#1}^{#2}{(#3)}}

\newcommand{\bsC}[3]{\bar{\mathcal{C}}_{#1}^{#2}{(#3)}}

\newcommand{\into}{\hookrightarrow}

\newcommand{\maxim}{\mathfrak m}
\newcommand{\nat}{\mathbb N}

\newcommand{\op}{\operatorname}

\newcommand{\zet}{\mathbb Z}



\newcommand{\commdiagram}[9][]{%
\begin{equation}
{\newcommand{\tmpprop}{#1q} 
\if\tmpprop\emptyprop \relax\else \label{#1}\fi}
\begin{aligned}%
\mbox{
\begin{picture}(130,90)%
\put(120,70){\vector( 0,-1){50}}%
\put(10,80){\vector( 1, 0){100}}%
\put(0,70){\vector( 0,-1){50}}%
\put(10,10){\vector( 1, 0){100}}%
\put(115,80){\makebox(0,0)[l]{$#4$}}%
\put(5,80){\makebox(0,0)[r]{$#2$}}%
\put(115,10){\makebox(0,0)[l]{$#9$}}%
\put(5,10){\makebox(0,0)[r]{$#7$}}%
\put(-3,50){\makebox(0,0)[r]{$#5$}}
\put(123,50){\makebox(0,0)[l]{$#6$}}
\put(60,3){\makebox(0,0)[c]{$#8$}}
\put(60,88){\makebox(0,0)[c]{$#3$}}
\end{picture}}
\end{aligned}
\end{equation}}

\newcommand{\commtrianglefront}[7][]{%
\begin{equation}
{\newcommand{\tmpprop}{#1q} 
\if\tmpprop\emptyprop \relax\else \label{#1}\fi}
\begin{aligned}%
\mbox{
\begin{picture}(120,80)%
\put(55,68){\vector(-1,-2){30}}
\put(65,68){\vector(1,-2){30}}
\put(30,5){\vector(1,0){60}}
\put(60,75){\makebox(0,0)[c]{$#2$}}
\put(25,5){\makebox(0,0)[r]{$#4$}}
\put(95,5){\makebox(0,0)[l]{$#6$}}
\put(60,0){\makebox(0,0)[c]{$#5$}}
\put(37,43){\makebox(0,0)[r]{$#3$}}
\put(83,43){\makebox(0,0)[l]{$#7$}}
\end{picture}}
\end{aligned}
\end{equation}}

\newcommand{\commtriangleback}[7][]{%
\begin{equation}
{\newcommand{\tmpprop}{#1q}
\if\tmpprop\emptyprop \relax\else \label{#1}\fi}
\begin{aligned}%
\mbox{
\begin{picture}(120,80)%
\put(55,70){\vector(-1,-2){30}}
\put(65,70){\vector(1,-2){30}}
\put(30,5){\vector(1,0){60}}
\put(60,75){\makebox(0,0)[c]{$#2$}}
\put(25,5){\makebox(0,0)[r]{$#6$}}
\put(95,5){\makebox(0,0)[l]{$#4$}}
\put(60,0){\makebox(0,0)[c]{$#5$}}
\put(37,43){\makebox(0,0)[r]{$#7$}}
\put(83,43){\makebox(0,0)[l]{$#3$}}
\end{picture}}
\end{aligned}
\end{equation}}


\hyphenation{Noe-the-rian}



\usepackage{mathabx}



\newcommand{\fat}{\mathfrak z}

\newcommand{\categ}[1]{\mathbbmss{#1}}

\newcommand{\sieves}[1]{\categ{Sieve}_{#1}}

\newcommand{\fld}{\kappa}

\newcommand{\ord}[2]{\op{ord}_{#1}(#2)}






\newcommand{\grot}[1]{{\mathbf {Gr}(#1)}}

\newcommand{\fatpoints}[1]{\categ {Fat}_{#1}}

\newcommand{\lef}{{\mathbb L}}


\makeatother

\begin{document}

\title{A primitive change of variables formula for schemic motivic integration}

\author{Andrew R. Stout}

\address{Andrew R. Stout\\
 Graduate Center, City University of New York\\
 365 Fifth Avenue\\
10016\\
U.S.A. \& Universit\'e Pierre-et-Marie-Curie \\
 4 place Jussieu \\
75005\\
Paris, France}

\email{astout@gc.cuny.edu}

\maketitle

\begin{abstract}
We develop further the theory of integrable functions within the theory of relative simplicial
motivic measures. We provide a primitive change of variables formula for this theory.
\end{abstract}
\setcounter{tocdepth}{1} \tableofcontents{}

\section{Introduction}

In \cite{me2}, I developed the notion of an $\sigma$-integrable function within the theory of schemic motivic integration. In this paper, we generalize the notion of integrability so that the analogy with constructible motivic integration can be seen. In particular, we prove a schemic version of Theorem 10.1.1 of \cite{CL}. Note that there is no cell-decomposition theorem in schemic motivic integration. So, naturally, there are going to be quite a few results from constructible motivic integration which do not carry over to the schemic theory. Thus, we propose the question of what the analogues of Statements A7 and A8 of Theorem 10.1.1 could be.

In this paper, we observe that simplicial sets are not all that useful when dealing with some rather natural types of cach\'e functions; however, this does not present a significant difficulty for the theory since we  rarely used the face and degeneracy maps in \cite{me3} and since the results of \cite{me2} and \cite{me3} readily generalize to countably indexed families of sieves.

One of the key insights of this paper is that when it comes to dealing with cach\'e functions there is no reason to avoid sheafifying as I did in \cite{me3}. In fact, one sees that once we sheafify, we have two different types of integrable functions: strongly integrable functions and just general integrable functions. The important point is that strongly integrable functions would basically be the analogue of the notion of integrable constructible motivic function in \cite{CL}; however, sheafifying allows us to consider a more general notion of integrability (the notion of an integrable function).

The theory of schemic motivic integration (over finite arcs) was created by Hans Schoutens in \cite{schmot1} and \cite{schmot2}. In \cite{me1} and \cite{me2}, I began to develop the notion of the schemic (geometric) integral which could take place over infinite arcs. In \cite{me3}, I developed the more general theory of schemic motivic integration over infinite arcs in the framework of \S 4, 5, \& 6 of 
\cite{CL}. However, this was just a stepping stone toward the general notion of integrability over infinite arcs as outlined in this paper. Much of the terminology and notation is pulled from  \cite{schmot1}, \cite{schmot2}, \cite{me3}, and \cite{me2}. The reader may consult these previous papers for the necessary background material.

This work was partially supported by the chateaubriand fellowship, Prof. F. Loeser, and DSC research grant.

\section{Explicit description of pushforward and pullback} \label{2}

Let $\sY\ $ be in $\categ{s}^n\limsieves{\fld}$ and consider the element $\sX\ $ of $\categ{s}^n\limsieves{\sY{}}$ defined by $\sX\ = \sY{} \times(\bA_{\fld}^{m})$ which is given by the condition
\begin{equation}
\sX{\sigma} = \sY\sigma\times\bA_{\fld}^{m} \ \ \ \forall \sigma \in \prod_{j=1}^{n}\Delta^{\circ}
\end{equation}
Assume that the projection morphism $p : \sX\ \to \sY\ $ is continuous. Then, it induces the pull-back morphism  $p^{\#}$ which is the standard sheaf theoretic morphism from the sheaf of semirings $\sT{}{+}{-}$ 
on $\sY\ $ to the sheaf of semirings $p_*\sT{}{+}{-}$  on $\sY\ $. This morphism is none other than precomposition of a total function $f$ on $\sY\ $ with the map $p$. Applying the global sections functor, we obtain a homomorphism of semirings
\begin{equation}
\sT{}{+}{\sY\ }\otimes_{\sT{0}{+}{\sY{}}}\sT{0}{+}{\sX\ } \to \sT{}{+}{\sX\ } 
\end{equation}
via sending $f\otimes \mathbbm{1}_{\sZ\ }$ to $p_{\#}(f)\cdot  \mathbbm{1}_{\sZ\ }$. We would like this to be an isomorphism although we will soon see that this is in general not the case at all. Surjectivity is immediate.  To see why injectivity fails in general, consider
a  general element $g$ of the domain. It is of the form
\begin{equation}
g = \sum_j (\sum_i a_{ij}\alpha_{ij}\lef^{\beta_{ij}}\otimes \mathbbm{1}_{\sZ{j}})
\end{equation}
with $a_{ij}$ in $\bA_{+}$, $\alpha_{ij}$ and $\beta_{ij}$ in $\sS{}{+}{\sY\ }$.
If $g$ is sent to zero under the morphism in question, then because we are working in a semiring, we immediately obtain that $\sum_i a_{ij}\alpha_{ij}\lef^{\beta_{ij}}\otimes \mathbbm{1}_{\sZ{j}}$ is sent to zero for each $j$. 
Combining the terms in this sum such that $(\beta_i \circ p)|_{\sZ{j}} = (\beta_k \circ p)|_{\sZ{j}}$ for some $i$ and $k$, we immediately obtain that all the coefficients of this sum (after precomposing with $p$) are zero. This implies that 
$ a_{ij}\alpha_{ij}\circ p$ is zero on $\sZ{j}$ for each $i$ and each $j$. It is irrelevent what the values are outside of $\sZ{j}$ as we are tensoring with  $\sZ{j}$'s characteristic function. Therefore, $ a_{ij}\alpha_{ij}$ is zero on
$p(\sZ{j})$. However, all this shows is that
$a_{ij}\alpha_{ij} \otimes \mathbbm{1}_{p(\sZ{j})}$ is zero. Classically speaking, one could use this to prove that $a_{ij}\alpha_{ij} \otimes \mathbbm{1}_{\sZ{j}}$ is also zero as we could make use of quantifier elimination for the language of presburger sets
to show that these conditions are equivalent.  Since we are not working within a model-theoretic framework in general, we will need to find new approaches to problems like these.

The reason we would like such an isomorphism is that it would help us to construct a ring homomorphism from $\sC{}{}{\sX\ }$ to $\sC{}{}{\sY\ }$ in a way that mirrors the work of \S $5.2, \ 5.3,$ and $5.6$ of \cite{CL}. Our work with the functorial approach has paid off as such a morphism is  induced from $p: \sX\ \to\sY\ $. This is a special case of Theorem 7.2 in \cite{me3}. For the benefit of the reader, we work this out in this specific case:

\subsection{Projection along $\fld$-variables}
Let $\mot_{\sX{}}$ be a motivic site. Then we would like to define a canonical ring homomorphism 
\begin{equation}
\sC{}{}{\sX{}, \mot_{\sX{}}} \to \sC{}{}{\sY{},p(\mot_{\sX{}})} \ ,
\end{equation}
when $\sY\ \in \nsieves{\fld}{n}$ and $\sX\ = \sY\ \times \bA_{\fld}^{m}$.
Note that  $p(\mot_{\sX{}})$ is defined in equation 30 of \cite{me2}, and there it is shown that $p$ actually induces a morphism of motivic sites $\mot_{\sX{}} \to p(\mot_{\sX{}})$. Furthermore, it is proven in Corollary 5.3 of loc. cit. that this induces a ring homomorphism
\begin{equation}
\grot{\mot_{\sX{}}} \to \grot{p(\mot_{\sX{}})} \ .
\end{equation}
However, by the definition of cach\'e functions, we need to first consider the measurable seives coming from $\mot_{\sX{}}$. We claim therefore that $p$ actually induces a morphism from $\mmot_{\sX{}}$ to  $\categ{Mes} p(\mot_{\sX{}})$.
Note that $\sX\ $ is actually a projective limit of $\sY{\maxim}\times\bA_{\fld}^{m}$ where $\maxim$ runs over the fat points in the point system of $\sY{}$ as the point system for $\bA_{\fld}^{\fld}$ is just $\{\spec\fld\}$. More details regarding this product can be found 
in the proof of Theorem 5.5 of \cite{me2}. As the simplicial arc functor $\sarc_{\maxim}$ preserves products, the assumption that $\sX\ $ is measurable is equivalent to $\sY\ $ being measurable. Now, for a general element $\mathcal{S}$ of $\mmot_{\sX{}}$ , it is exactly the same 
to show that $p(\mathcal{S})$ is measurable. So, in fact,  we have an isomorphism of motivic sites
\begin{equation}
\mot_{\sX{}} \cong p(\mot_{\sX{}}) \ .
\end{equation}
Then, by Theorem 5.22 of loc. cit., we also have an isomorphism of rings $\grot{\mot_{\sX{}}} \cong \grot{p(\mot_{\sX{}})}$. 
Thus, tensoring with this isomorphism gives a surjective homomorphism of rings from $\sC{}{}{\sX\ , \mot_{\sX\ }}$ to  $\sC{}{}{\sY\ ,p(\mot_{\sX\ })}$, 
which we will again denote by $p_{\#}.$ Note that in the case of $\mot_{\sX\ } =\categ{s}^n\limsieves{\sX{}}$, we get an isomorphism of motivic sites $\categ{s}^n\limsieves{\sX{}} \to  \categ{s}^n\limsieves{\sY{}}$. Form now on and until \S4, we will suppress the motivic site in $\sC{}{}{\sX{}, \mot_{\sX{}}}$ in this 
case-- i.e, we will write $\sC{}{}{\sX{}}$ for $\sC{}{}{\sX{}, \categ{s}^n\limsieves{\sX{}}}$. So, in particular, we have a surjective ring homomorphism
\begin{equation}
p_{\#}:\sC{}{}{\sY{}\times\mathbb{A}_{k}^{m}} \to \sC{}{}{\sY\ }
\end{equation}
for any $\sY\ \in\categ{s}^n\limsieves{\fld}$. This morphism is just the morphism of global sections of the respective sheaves induced by pushforward under the global sections functor. Tensoring by $\bar\bA$, we also arrive at a surjective ring homomorphism from  $\bsC{}{}{\sX\ , \mot_{\sX\ }}$ to  $\bsC{}{}{\sY\ ,p(\mot_{\sX\ })}$, which can be described in exactly the same way.

\subsection{The projection formula}

In the previous section, we made use of the ''pullback'' of motivic sites to constuct the pushforward $p_{\#}:\sC{}{}{\sY{}\times\mathbb{A}_{k}^{m}} \to \sC{}{}{\sY\ }$ of a projection. We would like to now discuss in detail the concept of pullback of cach\'e functions under any morphism $f : \sX{} \to \sY{}$ in $\categ{s}^n\limsieves{\sX{}}$. First note that $f$ induces a morphism of motivic sites from $\categ{s}^n\limsieves{\sY{}}$ to $\categ{s}^n\limsieves{\sX{}}$ by sending $\mathcal{S}$ to $\mathcal{S}\times_{\sY{}}\sX{}$.  We denote this morphism of motivic sites by $f^{-1}$.
Note that $f^{-1}$ restricts to a morphism $\nmes{\sY{}}{n} \to f^{-1}(\nmes{\sY{}}{n})$ and by using Theorem 7.1 of \cite{me3}, we obtain a morphism $\mmot_{\sY{}}$ to $\categ{Mes}f^{-1}(\mot_{\sX{}})$. This induces a ring homomorphism 
\begin{equation}\label{pub}
 \grot{\mmot_{\sY{}}}\to\grot{\categ{Mes}f^{-1}(\mot_{\sX{}})} \ ,
\end{equation}
which is just induced by the global sections functor applied to the pullback morphism of sheaves discussed in Theorem 7.2 of \cite{me3}. 
We can also define a morphism  $f^{\#}$ from the sheaf of rings of permissible functions on $\sY{}$ to the direct image of the sheaf of rings of permissible functions on $\sX\ $ by simplying precomposing by $f$.
This also induces a morphism  $f^{\#}$ from the sheaf of rings of total functions on $\sY{}$ to the direct image of the sheaf of rings of total functions on $\sX\ $ via 
$f^{\#}(a \alpha \lef^{\beta}) := af^{\#}(\alpha ) \lef^{f^{\#}(\beta)}$ where $a\in\bA$ and $\alpha, \beta\in \sS{}{}{\mathcal{U}}$ for any admissible open $\mathcal{U}$ of $\sY{}$. Tensoring this morphism with the pullback morphism of grothendieck rings defined in \ref{pub}, we obtain the pullback morphism $f^{\#}$ from the presheaf of cach\'e functions on $\sY{}$ with respect to the motivic site $\mot_{\sY{}}$ to the direct image of the presheaf of cach\'e functions on $\sX{}$
with respect to the motivic site $f^{-1}(\mot_{\sY{}})$. In particular, by applying the global sections functor, we obtain the ring homomorphism
\begin{equation}
f^{\#} : \sC{}{}{\sY{},\mot_{\sY{}}} \to \sC{}{}{\sY{},f^{-1}(\mot_{\sY{}})} \ .
\end{equation}
Note that if $\mot_{\sY{}} =\categ{s}^n\limsieves{\sY{}}$, then $f^{-1}(\mot_{\sY{}})\subset\categ{s}^n\limsieves{\sX{}}$, and thus, we end up with the pushforward ring homomorphism from $\sC{}{}{\sY{}}$ to $\sC{}{}{\sX{}}$. Note that we denote the sheafication functor by $(-)^a$, but we will not have much need for it until \S 4. 

Now, when we restrict ourselves to the case where $p : \sY{} \times \bA_{\fld}^{m} \to \sY{}$ is the projection morphism, we quickly obtain the following projection formula
\begin{equation}
p_{\#}(xp^{\#}(y)) = p_{\#}(x)y
\end{equation}
where $x \in \sC{}{}{ \sY{} \times \bA_{\fld}^{m}}$ and  $y \in \sC{}{}{\sY{}}$. This is just by applying the fact that the pullback functor is the right adjoint of the pushforward functor.

We would like to now discuss in detail the concept of pushforward of cach\'e functions under any morphism $f : \sX{} \to \sY{}$ in $\categ{s}^n\limsieves{\fld}$. First note that $f$ induces a morphism of motivic sites from $\categ{s}^n\limsieves{\sX{}}$ to $\categ{s}^n\limsieves{\sY{}}$ by sending $\mathcal{S} \subset \sX{}\times\bA_{\fld}^{m}$ to $(f\times\mbox{id}_{\bA_{\fld}^{m}})(\mathcal{S})$. We again denote this morphism of motivic sites by $f$.
Note that $f$ restricts to a morphism $\categ{Mes}\categ{s}^n\limsieves{\sX{}} \to f\categ{Mes}\categ{s}^n\limsieves{\sX{}})$ and by using Theorem 7.1 of \cite{me3}, we obtain a morphism $\mmot_{\sX{}}$ to $\categ{mes}f(\mot_{\sX{}})$. This induces a ring homomorphism 
\begin{equation}\label{pib}
 \grot{\mmot_{\sX{}}}\to\grot{\categ{Mes}f(\mot_{\sX{}})} \ ,
\end{equation}
which is just induced by the global sections functor applied to the pushforward morphism of sheaves discussed in Theorem 7.2 of \cite{me3}.  We have already shown that there is a pushforward morphism $f_{\#}$ from sheaf of rings $\sT{}{}{-}$ 
on $\sX\ $ to the sheaf of rings $f^{-1}\sT{}{}{-}$  on $\sX\ $ in \cite{me3} (not just for projections as in the beginning of \S 2). Thus, by tensoring with the ring homomorphism defined in \ref{pib}, we 
obtain the pushforward morphism $f_{\#}$ from the presheaf of cach\'e functions on $\sX{}$ with respect to the motivic site $\mot_{\sX{}}$ to the direct image of the presheaf of cach\'e functions on $\sY{}$
with respect to the motivic site $f(\mot_{\sX{}})$. In particular, by applying the global sections functor, we obtain the ring homomorphism
\begin{equation}
f_{\#} : \sC{}{}{\sX{},\mot_{\sX{}}} \to \sC{}{}{\sY{},f(\mot_{\sX{}})} \ .
\end{equation}
Note that if $\mot_{\sY{}} =\categ{s}^n\limsieves{\sX{}}$, then $f(\categ{s}^n\limsieves{\sX{}})\subset \categ{s}^n\limsieves{\sY{}}$, and thus, we end up with the  ring homomorphism from $\sC{}{}{\sX{}}$ to $\sC{}{}{\sY{}}$ induced by the applying the global sections functor to the pushforward. By applying the fact that the pullback functor is 
 the right adjoint of the pushforward functor, we obtain the following:
\begin{theorem}\label{projform}
Let $f : \sX\ \to \sY\ $ be any continuous morphism in $\categ{s}^n\limsieves{\fld}$ and let $\mot_{\sX{}}$ be any motivic site relative to $\sX\ $. Then, for any  $x \in \sC{}{}{\sX\ ,\mot_{\sX{}}}$ and  any $y \in \sC{}{}{\sY{},f(\mot_{\sX{}})}$, we have
\begin{equation}
f_{\#}(xf^{\#}(y)) = f_{\#}(x)y \ .
\end{equation}
More generally, if we set $\mathcal{F} =  \sC{}{}{- ,\mot_{\sX{}}|_{-}}^a$ and 
$\mathcal{G} =  \sC{}{}{- ,p(\mot_{\sX{}})|_{-}}^a$, then we have the following isomorphism of sheaves
\begin{equation}
f_{!}(\mathcal{F}\otimes f^{*}\mathcal{G})\cong f_{!}\mathcal{F}\otimes\mathcal{G}
\end{equation}
where $f_{!}$ is the pushforward with compact support functor.
\end{theorem}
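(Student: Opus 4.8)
The plan is to prove the global identity first, directly from the explicit descriptions of $f_{\#}$ and $f^{\#}$ assembled in Section~\ref{2}, and then to promote it to the asserted isomorphism of sheaves by exhibiting the natural projection morphism and checking that it is an isomorphism stalkwise. Throughout I would use that $f^{\#}$ is a ring homomorphism and that, by the adjunction recalled just before the statement, $f_{\#}$ is left adjoint to $f^{\#}$.

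For the projection formula $f_{\#}(xf^{\#}(y)) = f_{\#}(x)y$, note first that both sides are biadditive in $(x,y)$ and that $f_{\#}$ is additive; hence it suffices to verify the identity on generators. By construction a cach\'e function decomposes as a tensor of a total function $a\alpha\lef^{\beta}$ with a class in the Grothendieck ring, and both $f_{\#}$ and $f^{\#}$ act factorwise on this decomposition. On the total-function factor $f^{\#}$ is precomposition by $f$, so the two sides agree there by functoriality of precomposition together with the defining rule $f^{\#}(a\alpha\lef^{\beta}) = af^{\#}(\alpha)\lef^{f^{\#}(\beta)}$. On the Grothendieck-ring factor the entire content reduces to the behaviour of characteristic functions: writing $x = (\,\cdot\,)\otimes\mathbbm{1}_{\mathcal{Z}}$ and $y = (\,\cdot\,)\otimes\mathbbm{1}_{\mathcal{W}}$, one has $f^{\#}(\mathbbm{1}_{\mathcal{W}}) = \mathbbm{1}_{f^{-1}(\mathcal{W})}$, and the desired equality is precisely the set-theoretic identity $f(\mathcal{Z}\cap f^{-1}(\mathcal{W})) = f(\mathcal{Z})\cap\mathcal{W}$ transported through the ring homomorphism $\grot{\mmot_{\sX{}}}\to\grot{\categ{Mes}f(\mot_{\sX{}})}$ of \eqref{pib}. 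Recombining the two factors yields the identity on generators, hence in general.

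For the sheaf statement I would first construct the natural projection morphism $\pi : f_{!}(\mathcal{F}\otimes f^{*}\mathcal{G})\to f_{!}\mathcal{F}\otimes\mathcal{G}$, where $f^{*}$ is the sheaf-level pullback underlying $f^{\#}$. Granting that $f^{*}$ is strong monoidal on these sheaves of cach\'e functions, $\pi$ is obtained by adjointness, exactly as in the construction of $p_{\#}$ above, from the map $\eta_{\mathcal{F}}\otimes\mathrm{id}_{f^{*}\mathcal{G}} : \mathcal{F}\otimes f^{*}\mathcal{G}\to f^{*}(f_{!}\mathcal{F}\otimes\mathcal{G})$, where $\eta$ is the unit of the relevant pushforward-pullback adjunction and the target is identified using monoidality. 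It then remains to check that $\pi$ is an isomorphism, and since both sides are sheaves this is tested on stalks. On a stalk the compact-support pushforward $f_{!}$ computes the sections supported over the relevant fibre, and the stalk of $\pi$ is exactly the global identity of the first part applied over that fibre; thus $\pi$ is a stalkwise isomorphism and therefore an isomorphism of sheaves.

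The main obstacle is this last step: upgrading the generator-level equality to an honest isomorphism of the compact-support pushforward sheaves. Two points demand care. First, one must verify that $f^{*}$ genuinely preserves the tensor product of cach\'e-function sheaves, which is what licenses the construction of $\pi$; this rests on the factorwise description of pullback and on sheafification commuting with the relevant tensor. Second, one must ensure that the support conditions defining $f_{!}$ are compatible fibrewise with the identity $f(\mathcal{Z}\cap f^{-1}(\mathcal{W})) = f(\mathcal{Z})\cap\mathcal{W}$, so that the stalk of $\pi$ is precisely the map controlled by the first part. Because there is no cell-decomposition theorem available in the \zariski\ theory, this compatibility must be argued directly from the definitions of the motivic sites and of $f_{!}$, rather than by reduction to any normal form.
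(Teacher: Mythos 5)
Your route is genuinely different from the paper's. The paper offers essentially no computation: the whole argument, for both the displayed identity and the sheaf-level statement, is the remark made immediately before the theorem that the pullback functor is right adjoint to the pushforward functor, so the projection formula is extracted formally from the unit and counit of that adjunction. You instead unwind the definitions: biadditivity reduces everything to generators $g\otimes\mathbbm{1}_{\mathcal{Z}}$, the Grothendieck-ring factor reduces to the set-theoretic identity $f(\mathcal{Z}\cap f^{-1}(\mathcal{W}))=f(\mathcal{Z})\cap\mathcal{W}$ transported through the homomorphism of \eqref{pib}, and the sheaf statement is obtained by constructing the canonical comparison map and checking it locally. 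What your version buys is a visible identification of where the content actually sits (the image/preimage identity, which holds pointwise on fat points since sieves are subfunctors); what the paper's version buys is that it never has to know what $f_{\#}$ does to a total function.

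That last point is where your proposal has a genuine soft spot. On the total-function factor you assert the two sides ``agree by functoriality of precomposition,'' but functoriality of precomposition only governs $f^{\#}$; what you actually need there is $f_{\#}\bigl(g\cdot f^{\#}(h)\bigr)=f_{\#}(g)\cdot h$ for total functions $g$ on $\sX{}$ and $h$ on $\sY{}$, i.e.\ the projection formula for the pushforward of $\sT{}{}{-}$ itself. That is a fibrewise statement about whatever $f_{\#}$ does along fibres and is not a consequence of how precomposition behaves, so as written your generator computation reduces the cach\'e-function projection formula to the total-function one without proving the latter; you must either compute it fibrewise from the definition of $f_{\#}$ on $\sT{}{}{-}$ in the cited prior work, or invoke the same adjunction the paper uses, at which point the first half of your argument is largely subsumed. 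The sheaf-level part is structurally reasonable (the comparison map exists by monoidality of $f^{*}$ plus adjunction), but ``check on stalks'' presupposes that the site of admissible opens has enough points, which is established nowhere in the paper; verifying the map is an isomorphism on a cover by admissible opens is the safer formulation, and the passage from $f_{\#}$ to $f_{!}$ needs to be made explicit rather than left as an acknowledged obstacle, since the statement is about $f_{!}$.
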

Note that all of these results hold when replacing $\sC{}{}{\sX\ ,\mot_{\sX{}}}$ and $\sC{}{}{\sY{},f(\mot_{\sX{}})}$ by $\bsC{}{}{\sX\ ,\mot_{\sX{}}}$ and $\bsC{}{}{\sY{},f(\mot_{\sX{}})}$, respectively.

\section{Schemic Jacobians and integration over families of sieves}

When dealing with $\fld$-varieties as functors of points, the category of fields containing $\fld$ is enough to characterize these functors. In short, $X^{\circ}$ and $Y^{\circ}$ are naturally isomorphic as functors from $\mbox{Fields}_{\fld}$ to $\set$ if and only if they are isomorphic varieties. However, when $X$ and $Y$ are merely separated $\fld$-schemes of finite type, then we have to consider them as functors from $\fatpoints\fld$ to $\set$. Formally, we have the following:

\begin{theorem}
Let $X$ and $Y$ be closed subschemes contained in a separated $\fld$-scheme $Z$ of finite type. Then, $X$ and $Y$ are non-isomorphic over $\fld$ if and only if there exists $\maxim\in\fatpoints\fld$ such that  $X^{\circ}(\maxim)$ and $Y^{\circ}(\maxim)$
are distinct subsets of $Z^{\circ}(\maxim)$.
\end{theorem}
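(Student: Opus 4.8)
The plan is to prove the statement in its contrapositive and affine-local form: that the closed subschemes $X$ and $Y$ of $Z$ coincide precisely when $X^\circ(\maxim) = Y^\circ(\maxim)$ as subsets of $Z^\circ(\maxim)$ for every $\maxim \in \fatpoints\fld$. Here the comparison is of $X^\circ$ and $Y^\circ$ as subfunctors of $Z^\circ$, the inclusions $X^\circ(\maxim) \hookrightarrow Z^\circ(\maxim)$ being genuine since a closed immersion is a monomorphism; so ``$\fld$-isomorphic'' is to be read via the ambient embeddings, that is, as equality of closed subschemes (abstract isomorphism is strictly coarser --- two distinct rational points of a line are abstractly isomorphic --- so it is the subfunctor that must be compared). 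One implication is then immediate: if $X = Y$ the two subfunctors are literally equal and no fat point separates them. The entire content is the converse, which I would prove by contraposition, producing from an assumed inequality $X \neq Y$ a single fat point at which the two point sets differ.

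First I would reduce to the affine case. Since $\spec A$ is local for every fat point $A$, any morphism $\spec A \to Z$ carries the closed point into some affine open $U = \spec R$; the preimage of $U$ is then open and contains the unique closed point, hence is all of $\spec A$, so the morphism factors through $U$. Consequently $X^\circ(\maxim) = Y^\circ(\maxim)$ for all fat points if and only if the same holds after restricting to each affine chart, while $X = Y$ if and only if the defining ideals agree chart by chart. It therefore suffices to take $Z = \spec R$ with $R$ a finitely generated $\fld$-algebra and $X = \spec R/I$, $Y = \spec R/J$, in which case $X^\circ(A) = \{\phi \in \hom{\fld}{R}{A} : \phi(I) = 0\}$ and similarly for $Y$.

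Now assume $I \neq J$, say $f \in I \setminus J$. The goal reduces to constructing a fat point $A$ and a homomorphism $\phi : R \to A$ with $\phi(J) = 0$ but $\phi(f) \neq 0$, for then $\phi \in Y^\circ(A) \setminus X^\circ(A)$. Put $S = R/J$ and let $\bar f \neq 0$ be the image of $f$. The essential step is to detect this possibly nilpotent element by a map into an Artinian local ring. Since $\ann{S}{\bar f}$ is a proper ideal, choose a maximal ideal $\maxim \supseteq \ann{S}{\bar f}$; then $\bar f \neq 0$ in $S_\maxim$. By the Krull intersection theorem $\bigcap_n \maxim^n S_\maxim = 0$ in the Noetherian local ring $S_\maxim$, so $\bar f \notin \maxim^n S_\maxim$ for some $n$. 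Then $A := S_\maxim/\maxim^n S_\maxim$ is Artinian local, and the composite $\phi : R \twoheadrightarrow S \to S_\maxim \to A$ kills $J$ while sending $f$ to a nonzero element, as desired.

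I expect the one genuine obstacle to be exactly this final construction, namely detecting a nilpotent element by an infinitesimal thickening. This is the very phenomenon highlighted in the introduction: reduced, field-valued points cannot see nilpotents, so one is forced into $\fatpoints\fld$, and the Krull intersection theorem is precisely what guarantees that truncating at a sufficiently high power of the maximal ideal preserves a chosen nonzero element. A secondary issue to settle is the residue-field convention for fat points. By the Nullstellensatz $\kappa(\maxim) = S/\maxim$ is a finite extension of $\fld$, so $A$ is a fat point in the sense allowing finite residue extensions; if instead the convention demands residue field exactly $\fld$, one first passes to a $\fld$-rational closed point (available when $\fld$ is algebraically closed) or base-changes along $\kappa(\maxim)/\fld$ before truncating, leaving the argument otherwise intact.
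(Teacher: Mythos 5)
Your argument is correct, but note that the paper itself offers no proof here: it simply defers to Lemma~2.2 of \cite{schmot1}, so what you have written is a self-contained proof of the cited lemma rather than a variant of anything in this paper. Your two main moves are exactly the standard ones: the reduction to an affine chart via locality of a fat point (the preimage of an affine open under $\spec A \to Z$ contains the unique closed point, hence is everything), and the detection of a nonzero, possibly nilpotent, element $\bar f \in S = R/J$ by truncation --- localize at a maximal ideal $\maxim \supseteq \ann{S}{\bar f}$, invoke Krull intersection to find $n$ with $\bar f \notin \maxim^n S_\maxim$, and take the fat point $S_\maxim/\maxim^n S_\maxim$. You are also right to insist that ``non-isomorphic over $\fld$'' must be read as ``distinct as closed subschemes of $Z$'' (equivalently, as subfunctors of $Z^{\circ}$); with abstract isomorphism the ``if'' direction is false, as your example of two rational points of a line shows, and this is indeed how the source lemma is stated. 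The one soft spot is your closing remark about residue fields: ``base-changing along $\kappa(\maxim)/\fld$'' produces a fat point over $\kappa(\maxim)$, not over $\fld$, so it does not by itself repair the strict convention over a non-algebraically-closed field --- and in that setting the statement can genuinely fail (e.g.\ $V(x^2-2)$ versus $V(x^2-3)$ over $\mathbb{Q}$ have empty point sets on every fat point with residue field $\mathbb{Q}$). Under the conventions actually in force in \cite{schmot1} (algebraically closed base field, or fat points allowed finite residue extensions) your proof goes through verbatim, so this is a matter of bookkeeping rather than a gap.
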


\begin{proof}
This is a restatement of Lemma 2.2 of \cite{schmot1}. A proof can be found there.
\end{proof}

At one level, it is quite nice to have such a characterization of these schemes as it permits us to form the grothendieck ring of a motivic site. However, at another level, we immediately have the need for results to hold relative to any point system. Usually, this can be done without much trouble but not always. For example, when we have need for a notion of order, as in the case of using the jacobianin, we run into difficulties. There are multiple ways one might approach this problem. We have tried one approach to this in \cite{me1} by using reduction maps to classical geometric motivic integration which worked nicely, yet this served in loc. cit. as a bottle-neck for gaining results regarding the motivic integral relative to any point system whatsoever.

For the moment, we will work with the category $\sieves\fld$.  Let $f: \sX{} \to \sY{}$ in $\sieves\fld$ be any rational morphism. We want the jacobian of $f$ to be a permissible function on $\sX{}$. Note that $f$ being rational means that it is the restriction of a morphism of schemes $f: X \to Y$ where $X$ and $Y$ are ambient spaces of $\sX{}$ and $\sY{}$, respectively. Locally, let $X = \spec A$ and $Y=\spec B$. By shrinking $X$ if necessary, we can assume that $A$ is a finitely generated $B$-algebra -- i.e., of the form
\begin{equation}
A=B[x_1\ldots,x_m]/(f_1,\ldots,f_s) \ .
\end{equation}
Let $B' = B[x_1\ldots,x_m]$ and $I =(f_1,\ldots,f_s)$. We have the homomorphism $\delta : I/I^2 \to A\otimes_{B'} \Omega_{B'/\fld}$ which is defined $a\otimes r$ to $a\otimes dr$ where $a\in A$ and $r \in I\cdot B$. Note that
\begin{equation}
 A\otimes_{B'} \Omega_{B'/\fld} \cong \oplus_{i=1}^{m}Adx_i
\end{equation}
and that $I/I^2$ is some finitely generated free $A$-module\footnote{This is because all schemes in this paper are Noetherian so that finitely generated and finite presentation are equivalent. Therefore, $I$ being a finitely presented $B$-algebra implies that $I/I^2$ is a finitely presented $A$-algebra.}  of rank $n$. Therefore, $\delta$ is the unique $A$-linear map defined by $f_i \mapsto df_i.$ This is none other than the Jacobian matrix $\mbox{Jac}(f): A^n \to A^m$ of the morphism $f$. Using the conormal exact sequence, one can show that the cokernel of $\mbox{Jac}(f)$ (i.e., of $\delta$) is $\Omega_{A/B}$.

Now, assume that $f$ is \'etale so that we can form the determinant of $\mbox{Jac}(f)$. This induces a function from $\beta_f: \sX{} \to \nat$ in the following way. Let $\maxim \in \fatpoints\fld$ and let $x : \maxim \to X$ be an element of $\sX{}(\maxim)$. We define 
\begin{equation}
\beta_f(\maxim)(x):= \ell(\mathcal{O}_{\maxim}(\maxim)/|\mbox{Jac}(f)|\circ x) \ .
\end{equation}

The author tried to prove that $\beta_{f}$ is a permissible function; however, there does not seem to be canonical face and degeneracy maps which satisfy the simplicial identities. Upon some reflection, this turns out not to be so important for the general theory as we worked so far with simplicial categories in \cite{me2} and \cite{me3} for motivational reasons (i.e., with the prospect of working over derived stacks). Everything works just as well if we forget the face and degeneracy morphisms in $\Delta^{\circ}$. We will denote this category by $\nat$, and we will replace the word simplicial with the word indexed. In other words, we are now working with families of sieves indexed by $\nat$. All the definitions and theorems that do not involve face and degeneracy morphisms carry over verbatim to this context. For example, it is immediate that $\beta_f \in\sS{}{}{\sX\ }$ whenever $f: \sX\ \to \sY\ $ is a \'etale rational morphism of sieves. For more on the category of indexed sieves see \S $7.3$ of \cite{me2} and \S $7.5$ of \cite{me3}.

Another function involving the jacobian can be obtained by considering a Jordan Holder composition series $\mathcal{A}=\{\mathfrak{a}_i\}$  for $R=\sO{\maxim}(\maxim)$ with $\maxim\in\fatpoints\fld$ as outlined in Remark 4.2 of \cite{schmot1}. There one needs a monomial ordering $<$ to construct $\mathcal{A}$. Then, the height of an element $r$ of $R$ is the minimal $i$ such that it is an element of the ideal $\mathfrak{a}_i$. We will denote the height as $h_{<}(r)$.

\begin{proposition}
Let $R$ be a local artinian ring which contains $\fld$ and which is equipped with a monomial ordering $<$. The height function $h_{<}: R \to \zet/\ell(R)\zet$ is uniquely determined by the pair $(R,<)$ .
\end{proposition}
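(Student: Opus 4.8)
The plan is to strip away the apparent dependence on choices by exhibiting the composition series $\mathcal{A} = \{\mathfrak{a}_i\}$ of Remark 4.2 of \cite{schmot1} as the canonical \emph{leading-term flag} attached to $<$, and then to read $h_{<}$ directly off the leading monomial. First I would fix the setup: since $R$ is a local artinian $\fld$-algebra with residue field $\fld$ (the situation for the coordinate ring of a fat point), Cohen's structure theorem gives a coefficient field and a presentation $R \cong \fld[x_1,\ldots,x_n]/I$ with $\maxim = (x_1,\ldots,x_n)$ and $I$ being $\maxim$-primary, so that $\ell(R) = \op{dim}_\fld R$. The ordering $<$ is a \emph{local} term order, normalized so that $1$ is the largest monomial and multiplication by any variable strictly lowers a monomial; standard-basis theory then yields the initial ideal $\op{in}_<(I)$ and the standard monomials $m_1 > m_2 > \cdots > m_\ell$ with $m_1 = 1$, which form a $\fld$-basis of $R$. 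For $r \in R$ I write $\op{lm}_<(r)$ for the largest standard monomial occurring in the unique expansion of $r$ in this basis.

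Next I would set, for $0 \le i \le \ell$,
\[
\mathfrak{a}_i = \fld\text{-span}\,\{m_{\ell-i+1},\ldots,m_\ell\},
\]
the span of the $i$ least standard monomials, and verify that this is exactly the series produced in loc. cit. The key point is that each $\mathfrak{a}_i$ is an \emph{ideal} and not merely a $\fld$-subspace: if $m$ lies among the $i$ least standard monomials and $x_k$ is a variable, then $x_k m < m$ because $<$ is local, so the normal form of $x_k m$ is a $\fld$-combination of standard monomials strictly below $m$, hence again in $\mathfrak{a}_i$; closure under addition and $\fld$-scaling is clear. Each quotient $\mathfrak{a}_i/\mathfrak{a}_{i-1}$ is then one-dimensional over $\fld = R/\maxim$, so $0 = \mathfrak{a}_0 \subset \cdots \subset \mathfrak{a}_\ell = R$ is a genuine composition series; moreover $r \in \mathfrak{a}_i$ if and only if $\op{lm}_<(r) \le m_{\ell-i+1}$. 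Taking the least such $i$ gives the closed formula $h_{<}(r) = \ell - j + 1$ in $\zet/\ell(R)\zet$, where $m_j = \op{lm}_<(r)$ (with the convention $h_{<}(0) = 0$); in particular units have height $\ell \equiv 0$.

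Uniqueness is then immediate: the standard monomials, the order among them, and the function $\op{lm}_<$ depend only on $I$ (equivalently on $R$) together with $<$, so the flag $\{\mathfrak{a}_i\}$, and hence $h_{<}$, is pinned down by $(R,<)$ alone, with no Jordan--H\"older refinement left to choose. The step I expect to be the main obstacle is precisely the verification that the leading-term flag is a flag of \emph{ideals}; this is where the hypothesis that $<$ is a local ordering (so that $\maxim$ strictly decreases leading monomials) is essential, and it is also what forces any composition series built from $<$ in the manner of Remark 4.2 to coincide with $\{\mathfrak{a}_i\}$, thereby removing the usual ambiguity of composition series.
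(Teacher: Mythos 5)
Your proposal is correct, and it is compatible with the paper's proof --- but the paper's ``proof'' consists of the single sentence that the claim follows immediately from the construction in Remark 4.2 of \cite{schmot1}, whereas you actually reconstruct that construction and verify the point on which canonicity hinges. Concretely, you exhibit the series $\mathcal{A}=\{\mathfrak{a}_i\}$ as the leading-term flag spanned by the standard monomials of $\op{in}_<(I)$, and you correctly isolate the one step that is not formal: that each $\mathfrak{a}_i$ is an ideal, which uses that $<$ is a local ordering so that multiplication by a variable strictly lowers leading monomials and normal forms stay supported on strictly smaller standard monomials. This buys something the paper's citation does not: a closed formula $h_<(r)=\ell-j+1$ with $m_j=\op{lm}_<(r)$, which makes the uniqueness assertion transparent (the flag is determined with no Jordan--H\"older ambiguity left) and is what one actually needs downstream when comparing $h_<$ with the order and degree functions. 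The only caveat worth recording is that a ``monomial ordering on $R$'' presupposes a chosen presentation $R\cong\fld[x_1,\ldots,x_n]/I$ (equivalently, generators of $\maxim$), so ``determined by the pair $(R,<)$'' must be read as including that data; this is implicit in the statement and in Remark 4.2, and your argument is consistent with it. Subject to your reading of Remark 4.2 being the intended one (the paper gives no independent check), the proof is complete.
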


\begin{proof}
This follows immediately from the construction in Remark 4.2 of loc. cit.
\end{proof}

We also have the order function on the local aritinian ring $R = k[t_1,\ldots,t_s]/I$ which we denote by $\mbox{ord} : R \to \nat$. This is defined as follows. Consider the morphism $i: R \to k[t_i]/(t_{i}^{N})$ where $N$ is the nilpotency of the variable $t_i$. Given any element $g \in  k[t_i]/(t_{i}^{N})$ and any element $g' \in k[[t_i]]$ whose residue class is $g$, we define $\ord{}g = \ord{}{g'}$ where $\ord{}-$ is the valuation on the discrete valuation ring $k[[t_i]]$. Then, for any $r \in R$, we define 
$\ord{}{r} = \sum_i \ord{}{i(r)}$. In other words, $\ord{}{r}$ is the sum of the minimal exponents with respect to the $i$-th coordinate where the sum runs over $i=1,\ldots,s$. We define 
\begin{equation}
\mbox{ordjac}(f)(\maxim)(x) = \ord{}{|\mbox{Jac}(f)|\circ x}
\end{equation}

Finally, we have one last function of interest which can be applied to the jacobian. Given an element $r$ of $R$, we define its degree, denoted by $\mbox{deg}(r)$, as the maximal $d\leq n$ such that $r \in M^d$ (here, we set $M^0 = (1)$) where $M$ is the maximal ideal of $R$ and where $n$ is the nilpotency of $R$. 
We define $\mbox{degjac}(f)$ to be the composition of $ |\mbox{Jac}(f)|\circ x$ and $\mbox{deg}.$ 

Potentially, any of these functions could be interesting to study within the framework of motivic integration. Naively, the length function seems to be the most natural in this context. However, $\mbox{ordjac}$ is closest thing to a generalization of the concept used in \cite{CL}. Also, since we have already made use of this function in \cite{me1} to prove a simple change of variables formula for schemic geometric motivic integration, we will continue to mostly focus on this function.

\begin{proposition} (The chain rule of $\mbox{ordjac}$):
Let $g : \sX{} \to \sY{}$ and $f :\sY{} \to \sZ{}$ be two \'etale  rational morphisms of sieves. Then, for all $\maxim \in \fatpoints\fld$, 
\begin{equation}
\mbox{ordjac}(f\circ g)(\maxim)(x) = \mbox{ordjac}(g)(\maxim)(f\circ x) + \mbox{ordjac}(f)(\maxim)(x) \ . 
\end{equation}
for all $x \in \sX{}(\maxim)$.
\end{proposition}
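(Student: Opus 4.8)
The plan is to reduce the stated identity to two ingredients: multiplicativity of the Jacobian determinant under composition, and additivity of $\ord{}{-}$ on products. First I would pass to the ambient affine charts, writing $\sX{}=\spec A$, $\sY{}=\spec B$, $\sZ{}=\spec C$, so that $g$ and $f$ are cut out by comorphisms on coordinate rings $g^{\sharp}:B\to A$ and $f^{\sharp}:C\to B$ with $(f\circ g)^{\sharp}=g^{\sharp}\circ f^{\sharp}$. With the presentations of $A$ over $B$ and of $B$ over $C$ as in the paragraph introducing $\mbox{Jac}$, the functoriality of the conormal sequence for the tower $\sX{}\to\sY{}\to\sZ{}$ gives the matrix chain rule $\mbox{Jac}(f\circ g)=(g^{\sharp}\mbox{Jac}(f))\cdot\mbox{Jac}(g)$, where $g^{\sharp}\mbox{Jac}(f)$ denotes the entrywise image under $g^{\sharp}$ of the Jacobian of $f$. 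Since $g$ and $f$ are \'etale, so that their Jacobian matrices are square and their determinants are defined, taking determinants yields $|\mbox{Jac}(f\circ g)|=(g^{\sharp}|\mbox{Jac}(f)|)\cdot|\mbox{Jac}(g)|$ in $A$ up to a unit, which contributes order zero and so will not affect any order.

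Next I would evaluate at a point $x\in\sX{}(\maxim)$, that is, apply the ring map $x^{\sharp}:A\to\sO{\maxim}(\maxim)=:R$ attached to $x:\maxim\to X$. Because $x^{\sharp}\circ g^{\sharp}=(g\circ x)^{\sharp}$, the pulled-back first factor $x^{\sharp}(g^{\sharp}|\mbox{Jac}(f)|)$ is exactly $|\mbox{Jac}(f)|\circ(g\circ x)$, i.e.\ the Jacobian determinant of $f$ read off at the pushed-forward point $g\circ x\in\sY{}(\maxim)$. Thus in the local artinian ring $R$ one obtains the single product identity $|\mbox{Jac}(f\circ g)|\circ x=\bigl(|\mbox{Jac}(f)|\circ(g\circ x)\bigr)\cdot\bigl(|\mbox{Jac}(g)|\circ x\bigr)$, which already encodes the whole geometric content of the claim.

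Finally I would apply $\ord{}{-}$ and invoke additivity on products, $\ord{}{uv}=\ord{}{u}+\ord{}{v}$, which by the defining formula $\ord{}{r}=\sum_i\ord{}{i(r)}$ reduces to additivity of each coordinatewise order on $k[t_i]/(t_i^{N})$. This is the step I expect to be the main obstacle. The coordinate order is defined by choosing a lift to $k[[t_i]]$ and reading the $t_i$-adic valuation, and although that valuation is perfectly additive on $k[[t_i]]$, the truncation modulo $t_i^{N}$ can break additivity once $\ord{}{u}+\ord{}{v}$ reaches the nilpotency index $N$, since then the product vanishes in $R$ while the two orders remain finite. I expect the \'etale hypothesis to be precisely what prevents this: \'etaleness should force the reductions $i\circ x^{\sharp}$ to factor through honest power-series arcs $A\to k[[t_i]]$ on which $\ord{}{-}$ is a genuine valuation, and to keep the orders of the two Jacobian factors within the nilpotency range so that no overflow occurs. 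Making this lifting precise --- verifying that along each coordinate the fat-point map extends to an arc computing $\ord{}{-}$, with the relevant orders below $N$ --- is the crux; granting it, additivity gives $\ord{}{|\mbox{Jac}(f\circ g)|\circ x}=\ord{}{|\mbox{Jac}(f)|\circ(g\circ x)}+\ord{}{|\mbox{Jac}(g)|\circ x}$, i.e.\ $\mbox{ordjac}(f\circ g)(\maxim)(x)=\mbox{ordjac}(f)(\maxim)(g\circ x)+\mbox{ordjac}(g)(\maxim)(x)$, which is the asserted chain rule.
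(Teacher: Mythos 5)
The paper states this proposition with no proof at all, so there is nothing of the author's to measure your argument against; your route --- multiplicativity of the Jacobian determinant under composition, compatibility of that identity with evaluation at a fat point, then additivity of $\ord{}{-}$ on products --- is surely the intended one, and your first two steps are correct and standard. Note that your derivation lands on $\mbox{ordjac}(f\circ g)(\maxim)(x)=\mbox{ordjac}(f)(\maxim)(g\circ x)+\mbox{ordjac}(g)(\maxim)(x)$, which is the version that typechecks: since $x\colon\maxim\to X$ and $f\colon Y\to Z$, the expression ``$f\circ x$'' appearing in the paper's right-hand side is not composable, so you have silently (and correctly) repaired a typo in the statement.

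The gap you flag in the last step is, however, genuine, and your proposed repair does not close it. The function $\ord{}{-}$ on the artinian ring $R=\sO{\maxim}(\maxim)$ is defined coordinatewise through the truncations $i\colon R\to\fld[t_i]/(t_i^{N})$, and on such a truncation the identity $\ord{}{uv}=\ord{}{u}+\ord{}{v}$ holds only when $\ord{}{u}+\ord{}{v}<N$; once the sum reaches $N$ the product is $0$ and its ``order'' is not even well defined, since it depends on the chosen lift to $\fld[[t_i]]$. For instance in $\fld[t]/(t^{3})$ with $u=v=\bar t^{2}$ one has $\ord{}{u}+\ord{}{v}=4$ while $uv=0$. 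Your hope that \'etaleness forbids this overflow is correct only in a degenerate sense: if $f$ and $g$ are literally \'etale then both Jacobian determinants are units, all three orders vanish, and the proposition reads $0=0+0$. For the statement to have content --- as it must, since $\mbox{ordjac}$ is meant to drive a change of variables formula --- ``\'etale'' has to be read loosely (the paper invokes it only to make the Jacobian matrix square), and then the overflow genuinely occurs on fat points whose nilpotency is small relative to the orders involved. Lifting to power-series arcs does not help, because a fat point is artinian and carries no canonical arc through which the computation could factor; the truncation is intrinsic to the definition of $\ord{}{-}$. A correct statement needs either the added hypothesis that $\ord{}{|\mbox{Jac}(f)|\circ(g\circ x)}+\ord{}{|\mbox{Jac}(g)|\circ x}$ stays strictly below the nilpotency of each coordinate, or a convention assigning order $\infty$ to $0$ and weakening the equality to an inequality in $\nat\cup\{\infty\}$.
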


\section{The general notion of schemic integration}

Let $\sX \ $ be an element of $\limsieves{\sY{}}$ and let $j: \sX{}\to \sY{}$ be the structure morphism. 
Consider a function $\varphi \in \sT{}{}{\sX{}}$.
 We say that $\varphi$ is  $\sY{}$-integrable
if there exists a cover of $\sY{}$ with admissible opens $\sY{\sigma}$ with $\sigma \in \nat$ such that 
\begin{enumerate}
\item $\sZ{\sigma}=j^{-1}(\sY{\sigma})$ is an admissible open of $\sX{}$ for all $\sigma\in \nat$.
\item for each $\sigma$, there exists an automorphism $\gamma_{\sigma}$  of $\sZ{\sigma}$.
\item for all $\sigma \in \nat$, there exists $\psi_{\sigma} \in \sT{}{}{\sY{\sigma}}$ such that $\varphi|_{\sZ{\sigma}}=\gamma_{\sigma}^{*} j^{*}\psi_{\sigma}$ for all $\sigma \in \nat$.
\item Define $\sY{\bullet}$ to be the indexed sieve given by $\sY{\sigma}$ and let $\psi_{\bullet}$ correspond to 
the total function on $\sY{\bullet}$ determined by the $\psi_{\sigma}$. Then, we require  $\psi_{\bullet}$ to be $\sigma$-integrable over $\sY{\sigma}$ for all $\sigma$.
\end{enumerate}
Additionally, if all of the $\sY{\sigma}$ can be chosen to be mutually disjoint, then we say that $\varphi$ is strongly integrable.

We denote the subring of all $\sY{}$-integrable functions in $\sT{}{}{\sX{}}$ by $I_{\sY{}}\sT{}{}{\sX{}}$ and the subring of all strongly integrable functions $SI_{\sY{}}\sT{}{}{\sX{}}$. Likewise, we define
\begin{equation}
\begin{split}
I_{\sY{}}\sC{}{}{\sX{},\mot_{\sX{}}} &:= \grot{\mmot_{\sX{}}}\otimes_{\sT{}{0}{\sY{}}}  I_{\sY{}}\sT{}{}{\sX{}} \\
SI_{\sY{}}\sC{}{}{\sX{},\mot_{\sX{}}} &:= \grot{\mmot_{\sX{}}}\otimes_{\sT{}{0}{\sY{}}}  SI_{\sY{}}\sT{}{}{\sX{}} \ .
\end{split}
\end{equation}

\begin{theorem} \label{inducedmor}
Let $f: \sX{} \to \sX{}^{\prime}$ be a continuous morphism of  limit $\sY{}$-sieves. Then, the pushforward $f_{\#}$  restricts to a
ring homomorphism 
\begin{equation}
f_{\#}: I_{\sY{}}\sC{}{}{\sX{},\mot_{\sX{}}} \to  I_{\sY{}}\sC{}{}{\sX{}^{\prime},f(\mot_{\sX{}})} \ .
\end{equation} 
\end{theorem}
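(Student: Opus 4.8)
The plan is to exploit that $f_{\#}$ is \emph{already} known (from \S\ref{2}) to be a ring homomorphism $\sC{}{}{\sX{},\mot_{\sX{}}}\to\sC{}{}{\sX{}^{\prime},f(\mot_{\sX{}})}$, so the only thing left to check is the inclusion $f_{\#}(I_{\sY{}}\sC{}{}{\sX{},\mot_{\sX{}}})\subseteq I_{\sY{}}\sC{}{}{\sX{}^{\prime},f(\mot_{\sX{}})}$. Since $f_{\#}$ was built as the tensor product of the Grothendieck-ring homomorphism \eqref{pib} with the sheaf-theoretic pushforward on total functions, and since $I_{\sY{}}\sC{}{}{\sX{},\mot_{\sX{}}}=\grot{\mmot_{\sX{}}}\otimes_{\sT{}{0}{\sY{}}}I_{\sY{}}\sT{}{}{\sX{}}$, bilinearity reduces the claim to one statement: the total-function pushforward sends $I_{\sY{}}\sT{}{}{\sX{}}$ into $I_{\sY{}}\sT{}{}{\sX{}^{\prime}}$. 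Indeed, on a simple tensor $\mu\otimes\varphi$ the factor coming from \eqref{pib} automatically lands in $\grot{\categ{Mes}f(\mot_{\sX{}})}$, the Grothendieck-ring factor of $I_{\sY{}}\sC{}{}{\sX{}^{\prime},f(\mot_{\sX{}})}$, so only the $\varphi$-part needs attention.

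Next I would fix $\varphi\in I_{\sY{}}\sT{}{}{\sX{}}$ together with its witnessing data: a cover $\{\sY{\sigma}\}_{\sigma\in\nat}$ of $\sY{}$ by admissible opens, the opens $\sZ{\sigma}=j^{-1}(\sY{\sigma})$, automorphisms $\gamma_{\sigma}$ of $\sZ{\sigma}$, and base functions $\psi_{\sigma}\in\sT{}{}{\sY{\sigma}}$ with $\varphi|_{\sZ{\sigma}}=\gamma_{\sigma}^{*}j^{*}\psi_{\sigma}$ and $\psi_{\bullet}$ being $\sigma$-integrable. I would test integrability of $f_{\#}\varphi$ against the \emph{same} cover $\{\sY{\sigma}\}$. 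Writing $j^{\prime}:\sX{}^{\prime}\to\sY{}$ for the structure morphism and $\sZ{\sigma}^{\prime}=(j^{\prime})^{-1}(\sY{\sigma})$, continuity of $j^{\prime}$ gives that each $\sZ{\sigma}^{\prime}$ is admissible open, so condition (1) holds on $\sX{}^{\prime}$. Because $f$ is a morphism over $\sY{}$ we have $j=j^{\prime}\circ f$, whence $f^{-1}(\sZ{\sigma}^{\prime})=\sZ{\sigma}$ and $f$ restricts to $f_{\sigma}:\sZ{\sigma}\to\sZ{\sigma}^{\prime}$. Since pushforward commutes with restriction to the preimage of an open, $(f_{\#}\varphi)|_{\sZ{\sigma}^{\prime}}=(f_{\sigma})_{\#}(\varphi|_{\sZ{\sigma}})=(f_{\sigma})_{\#}(\gamma_{\sigma}^{*}j^{*}\psi_{\sigma})$.

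The heart of the argument is to rewrite this pushforward in the form $(\gamma_{\sigma}^{\prime})^{*}(j^{\prime})^{*}\psi_{\sigma}^{\prime}$ demanded by condition (3). The relation $j=j^{\prime}\circ f$ gives $j^{*}\psi_{\sigma}=f_{\sigma}^{*}(j^{\prime})^{*}\psi_{\sigma}$, so that $\varphi|_{\sZ{\sigma}}$ is $\gamma_{\sigma}^{*}$ applied to a function pulled back along $f_{\sigma}$. Once the twist $\gamma_{\sigma}$ is moved past $f_{\sigma}$, yielding an automorphism $\gamma_{\sigma}^{\prime}$ of $\sZ{\sigma}^{\prime}$ with $f_{\sigma}\circ\gamma_{\sigma}=\gamma_{\sigma}^{\prime}\circ f_{\sigma}$, the projection formula of Theorem~\ref{projform} applies to $(f_{\sigma})_{\#}\bigl(f_{\sigma}^{*}((\gamma_{\sigma}^{\prime})^{*}(j^{\prime})^{*}\psi_{\sigma})\bigr)=(f_{\sigma})_{\#}(\mathbbm{1}_{\sZ{\sigma}})\cdot(\gamma_{\sigma}^{\prime})^{*}(j^{\prime})^{*}\psi_{\sigma}$. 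The leftover factor $(f_{\sigma})_{\#}(\mathbbm{1}_{\sZ{\sigma}})$ is the pushforward of the constant function; because $f_{\sigma}$ is a morphism of $\sY{\sigma}$-sieves it is itself pulled back from the base and invariant under $\gamma_{\sigma}^{\prime}$, so absorbing it produces $\psi_{\sigma}^{\prime}\in\sT{}{}{\sY{\sigma}}$ with $(f_{\#}\varphi)|_{\sZ{\sigma}^{\prime}}=(\gamma_{\sigma}^{\prime})^{*}(j^{\prime})^{*}\psi_{\sigma}^{\prime}$.

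Finally I would verify condition (4): since the base $\sY{}$ and the cover $\{\sY{\sigma}\}$ are unchanged and $\psi_{\sigma}^{\prime}$ differs from $\psi_{\sigma}$ only by multiplication with the base-pulled-back class $(f_{\sigma})_{\#}(\mathbbm{1}_{\sZ{\sigma}})$, $\sigma$-integrability of $\psi_{\bullet}^{\prime}$ follows from that of $\psi_{\bullet}$ together with the fact that the $\sigma$-integrable functions form a module over $\grot{\mmot_{\sY{}}}$. The same bookkeeping with a mutually disjoint cover shows that $f_{\#}$ also restricts on the strongly integrable subrings. The main obstacle is precisely the step of moving the automorphism $\gamma_{\sigma}$ across $f_{\sigma}$: a general automorphism of $\sZ{\sigma}$ need not descend to $\sZ{\sigma}^{\prime}$, and without a cell-decomposition theorem one cannot simply reparametrize fibers. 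I expect this to be handled either by refining the cover $\{\sY{\sigma}\}$ so that on each piece $f_{\sigma}$ acquires the explicit product form $\sY{\sigma}\times\bA_{\fld}^{m}\to\sY{\sigma}\times\bA_{\fld}^{m^{\prime}}$ used throughout \S\ref{2}, for which fiberwise automorphisms do descend and $(f_{\sigma})_{\#}(\mathbbm{1}_{\sZ{\sigma}})$ is the expected power of $\lef$, or by invoking the explicit description of the total-function pushforward from \cite{me3} to check directly that an automorphism-twisted base pullback is sent to one of the same shape.
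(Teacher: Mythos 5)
The step you yourself flag as ``the main obstacle'' is a genuine gap, and neither of the remedies you sketch closes it. To put $(f_{\sigma})_{\#}(\gamma_{\sigma}^{*}j^{*}\psi_{\sigma})$ into the form $(\gamma_{\sigma}^{\prime})^{*}(j^{\prime})^{*}\psi_{\sigma}^{\prime}$ required by condition (3) of the definition, you need an automorphism $\gamma_{\sigma}^{\prime}$ of $\sZ{\sigma}^{\prime}$ satisfying $f_{\sigma}\circ\gamma_{\sigma}=\gamma_{\sigma}^{\prime}\circ f_{\sigma}$. For a general continuous morphism of limit $\sY{}$-sieves no such descent exists: an automorphism that permutes points inside a fiber of $j$ which $f$ does not identify cannot factor through $f_{\sigma}$, and if $f_{\sigma}$ is not surjective there is no canonical extension to all of $\sZ{\sigma}^{\prime}$ either. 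Your first proposed fix --- refining the cover until $f_{\sigma}$ takes the product form $\sY{\sigma}\times\bA_{\fld}^{m}\to\sY{\sigma}\times\bA_{\fld}^{m^{\prime}}$ --- is precisely the kind of cell-decomposition statement the paper explicitly says is unavailable in the schemic setting, so it cannot be invoked. The auxiliary claim that $(f_{\sigma})_{\#}(\mathbbm{1}_{\sZ{\sigma}})$ is pulled back from the base and invariant under $\gamma_{\sigma}^{\prime}$ is also unjustified: the pushforward of the constant function records the fibers of $f_{\sigma}$, which need not be constant along the fibers of $j^{\prime}$. So as written the argument establishes conditions (1) and (2) only under an unproved compatibility hypothesis on $f$, and conditions (3)--(4) do not follow.

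For comparison, the paper's own proof avoids all of this fiberwise bookkeeping. It works at the level of the evaluation homomorphisms: for each real $q>1$ the rings $\sT{}{q}{\sX{}}$ and $\sT{}{q}{\sX{}^{\prime}}$ form sheaves of topological rings, the pushforward induces a continuous ring homomorphism $f_{\#}^{q}:\sT{}{q}{\sX{}}\to\sT{}{q}{\sX{}^{\prime}}$, and the integrability condition --- which is ultimately a convergence condition tested through the $\nu_{q}$ via the $\sigma$-integrability of $\psi_{\bullet}$ --- is preserved by continuity. Your reduction of the problem to the total-function factor of the tensor product via bilinearity is sound and matches the structure of the definition of $I_{\sY{}}\sC{}{}{\sX{},\mot_{\sX{}}}$, but to complete the proof you should either route the remaining step through the $\nu_{q}$-continuity as the paper does, or add an explicit hypothesis (or lemma) guaranteeing that the witnessing automorphisms $\gamma_{\sigma}$ descend along $f$.
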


\begin{proof}
For each real $q> 1$, we have the rings $\sT{}{q}{\sX{}}$ and $\sT{}{q}{\sX{}^{\prime}}$ obtained by applying the evaluation homomorphism $\nu_q$. Moreover, they define sheaves of topological rings. Thus, the pushforward actually induces a continuous ring homomorphism $f_{\#}^q : \sT{}{q}{\sX{}} \to \sT{}{q}{\sX{}^{\prime}}$. The theorem follows immediately from this fact.

\end{proof}

\begin{remark}
The pushforward actually restricts to a morphism of presheaves (or of sheaves if one wants to sheafify) as usual. We write $I\sC{}{}{\sX{},\mot_{\sY{}}}$ for $I_{\sY{}}\sC{}{}{\sX{},\mot_{\sY{}}}$ when $\sY{} = (\spec\fld)^{\circ}$.
\end{remark}

\begin{remark}
Note that for families indexed by $\nat^j$ for some $j> 1$, all of the definitions and results will be the same. Also, tensoring by $\bar\bA$ is enough to define the notion of weakly integrable as in \cite{me2} -- i.e., everything is exactly the same for $I_{\sY{}}\bsC{}{}{\sX{},\mot_{\sX{}}}$. In the interest of brevity, we develop the theory for $\sC{}{}{-}$ over $\limsieves{\sY{}}$ only. Finally, every sieve is trivially a limit sieve, so all of the definitions and results go through with respect to sieves.
\end{remark}

\begin{proposition}\label{clim}
Let $\sX{}$ and $\sY{}$ be limit simplicial sieves with point system $\bX$ where $\fat = \lim\bX$. Assume further that $\sX{}$ is a $\sY{}$-sieve. Then, 
\begin{equation}
I_{\sY{}}\sC{}{}{\sX{},\mot_{\sX{}}} = \injlim I_{\sY{\maxim}}\sC{}{}{\sX{\maxim},\mot_{\sX{\maxim}}} \ .
\end{equation}
Thus, every element $\varphi$ is of the form $\sarc_{\fat}g$ where $g$ is in $ I_{\sY{\maxim}}\sC{}{}{\sX{\maxim},\mot_{\sX{\maxim}}}$ for some $\maxim \in\bX$.
\end{proposition}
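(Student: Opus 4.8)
The plan is to present $I_{\sY{}}\sC{}{}{\sX{},\mot_{\sX{}}}$ as a filtered colimit over the point system $\bX$ by recognizing each of its three constituents as an injective limit of level-wise analogues, and then using that the tensor product commutes with filtered colimits. By the definition in \S4,
\[
I_{\sY{}}\sC{}{}{\sX{},\mot_{\sX{}}} = \grot{\mmot_{\sX{}}}\otimes_{\sT{}{0}{\sY{}}} I_{\sY{}}\sT{}{}{\sX{}}.
\]
Since $\sX{}$ and $\sY{}$ are limit simplicial sieves with common point system $\bX$, I would record at the outset that $\sX{}=\lim_{\maxim\in\bX}\sX{\maxim}$ and $\sY{}=\lim_{\maxim\in\bX}\sY{\maxim}$ are projective limits over the directed system $\bX$, that the structure morphism is $j=\lim_{\maxim}j_{\maxim}$ with $\sX{\maxim}$ a $\sY{\maxim}$-sieve for each $\maxim$, and that $\bX$ is filtered so that every colimit below is filtered.

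The main step, and the principal obstacle, is to prove
\[
I_{\sY{}}\sT{}{}{\sX{}} = \injlim_{\maxim\in\bX} I_{\sY{\maxim}}\sT{}{}{\sX{\maxim}},
\]
where the canonical comparison map sends a level-$\maxim$ function to its pullback $\sarc_{\fat}$ along $\sX{}\to\sX{\maxim}$. Injectivity is routine: if two level functions induce the same total function on $\sX{}=\lim_{\maxim}\sX{\maxim}$, they must already agree after passing to a common refinement in the filtered system $\bX$. Surjectivity is the substantive point and amounts to a descent statement: given a $\sY{}$-integrable $\varphi\in\sT{}{}{\sX{}}$, its entire integrability datum --- the cover $\{\sY{\sigma}\}_{\sigma\in\nat}$ by admissible opens, the opens $\sZ{\sigma}=j^{-1}(\sY{\sigma})$, the automorphisms $\gamma_{\sigma}$, and the local representations $\varphi|_{\sZ{\sigma}}=\gamma_{\sigma}^{*}j^{*}\psi_{\sigma}$ with $\psi_{\bullet}$ being $\sigma$-integrable --- must be shown to be realized at a single finite level $\maxim\in\bX$. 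I would argue that each individual piece of this datum factors through some finite stage, and then, because the cover is indexed by the countable set $\nat$ and $\bX$ is filtered, pass to a cofinal $\maxim$ carrying all the pieces simultaneously; the $\sigma$-integrability of $\psi_{\bullet}$ then descends through the finite-arc formalism of \cite{me2}. This simultaneous descent of a countably indexed integrability datum across the filtered point system is where the real work is concentrated.

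For the remaining two factors I would invoke the material of \S2: the simplicial arc functor $\sarc_{\maxim}$ preserves products, so measurability of a sieve over the limit is detected at finite levels, and the resulting isomorphisms of motivic sites (as around Theorem 5.5 and Theorem 5.22 of \cite{me2}) give $\grot{\mmot_{\sX{}}} = \injlim_{\maxim\in\bX}\grot{\mmot_{\sX{\maxim}}}$, while the same pullback-from-finite-level argument applied to the degree-zero total functions yields $\sT{}{0}{\sY{}} = \injlim_{\maxim\in\bX}\sT{}{0}{\sY{\maxim}}$. Since the three families form a compatible filtered system over $\bX$ and tensor products commute with filtered colimits even when the base ring varies, I would conclude
\[
\grot{\mmot_{\sX{}}}\otimes_{\sT{}{0}{\sY{}}} I_{\sY{}}\sT{}{}{\sX{}} = \injlim_{\maxim\in\bX}\left(\grot{\mmot_{\sX{\maxim}}}\otimes_{\sT{}{0}{\sY{\maxim}}} I_{\sY{\maxim}}\sT{}{}{\sX{\maxim}}\right),
\]
which is exactly the asserted identity. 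The final clause is then immediate, since every element of a filtered colimit is represented at some finite stage: any $\varphi$ equals $\sarc_{\fat}g$ for some $g\in I_{\sY{\maxim}}\sC{}{}{\sX{\maxim},\mot_{\sX{\maxim}}}$ and some $\maxim\in\bX$.
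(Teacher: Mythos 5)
The paper states Proposition \ref{clim} with no proof at all (the following proposition's proof merely cites it together with \S 6 of \cite{me3}), so there is no argument of the author's to compare yours against. Your overall strategy --- decompose $I_{\sY{}}\sC{}{}{\sX{},\mot_{\sX{}}}$ into the three factors of its defining tensor product, present each as a filtered colimit over the point system $\bX$, and commute $\otimes$ with filtered colimits --- is a reasonable reconstruction of what must be intended, and your handling of $\grot{\mmot_{\sX{}}}$ and $\sT{}{0}{\sY{}}$ via the finite-level formalism of \cite{me2} is plausible.

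There is, however, a genuine gap at exactly the step you yourself flag as ``where the real work is concentrated'': you never do that work, and the one-line justification you offer for it does not suffice. A $\sY{}$-integrability datum consists of a cover $\{\sY{\sigma}\}_{\sigma\in\nat}$ together with the $\gamma_{\sigma}$ and $\psi_{\sigma}$ --- \emph{countably infinitely many} pieces of data. Filteredness of $\bX$ only guarantees a common upper bound for \emph{finitely} many stages; if the $\sigma$-th piece descends only to some level $\maxim_{\sigma}$ and the $\maxim_{\sigma}$ are cofinal in $\bX$ (the typical point systems here are chains isomorphic to $\nat$, so this is not a hypothetical worry), there is no single $\maxim$ carrying all the pieces at once. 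Thus the sentence ``because the cover is indexed by the countable set $\nat$ and $\bX$ is filtered, pass to a cofinal $\maxim$'' is not an argument but a restatement of the claim to be proved, and as a general principle about directed systems it is false. To close the gap you need either an $\aleph_1$-filteredness hypothesis on $\bX$, or a uniformity statement --- e.g.\ that the entire cover and its attached data are already pulled back from a single finite level because $\varphi$ itself is of the form $\sarc_{\fat}g$ by the limit-sieve results of \S 6 of \cite{me3} --- and that statement must be isolated and proved rather than gestured at. Also note that ``cofinal'' is not the word you want there; you mean an upper bound for the family $\{\maxim_{\sigma}\}$.
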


We have not yet defined the measure $\mu_{\sY{}}$. For each $\sY{}$-integrable function $\varphi$, we do immediately obtain an element $\psi_{\sigma}$ of 
 $\sC{}{}{\sY{\sigma},\pi_{\sigma}(j(\mot_{\sX{}}))}$  
for all $\sigma \in \nat$ which assures to us that $\varphi$ is integrable. Temporarily, we define a set map $\mu_{\sY{}}^{\sigma} $ by sending $\varphi$ to the function  $\psi_{\sigma}$. 
This defines a ring homomorphism
\begin{equation}
\mu_{\sY{}}^{\sigma} : I_{\sY{}}\sC{}{}{\sX{},\mot_{\sX{}}} \to \sC{}{}{\sY{},j(\mot_{\sX{}})}\ .
\end{equation}

\begin{proposition} Let $\sX{}$ and $\sY{}$ be limit simplicial sieves with point system $\bX$ where $\fat = \lim\bX$. Assume further that $\sX{}$ is a $\sY{}$-sieve. Let $\varphi \in I_{\sY{}}\sC{}{}{\sX{},\mot_{\sX{}}} $. Then, there exists $\maxim \in \bX\ $ and a $g \in  I_{\sY{\maxim}}\sC{}{}{\sX{\maxim},\mot_{\sX{\maxim}}}$ such that 
\begin{equation}
\mu_{\sY{}}^{\sigma}(\varphi) = \sarc_{\fat}\mu_{\sY{}}^{\sigma}(g) \ .
\end{equation}
\end{proposition}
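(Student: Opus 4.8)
The plan is to push the problem down to a finite level $\maxim \in \bX$ using Proposition \ref{clim}, and then to observe that the measure map is compatible with the simplicial arc functor $\sarc_\fat$.

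First I would apply Proposition \ref{clim} to write $\varphi = \sarc_\fat g$ for some $\maxim \in \bX$ and some $g \in I_{\sY{\maxim}}\sC{}{}{\sX{\maxim},\mot_{\sX{\maxim}}}$; this is exactly the $\maxim$ and $g$ named in the statement, and it fixes the finite level at which all of the subsequent data will be produced. I would then recall the two relevant definitions. On the one hand, $\mu_{\sY{}}^{\sigma}(\varphi)$ is by construction the function $\psi_\sigma$ extracted from the integrability data of $\varphi$, so that $\varphi|_{\sZ{\sigma}} = \gamma_\sigma^{*} j^{*}\psi_\sigma$ on the admissible open $\sZ{\sigma} = j^{-1}(\sY{\sigma})$. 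On the other hand, since $g$ is $\sY{\maxim}$-integrable, the same construction applied at level $\maxim$ produces a cover of $\sY{\maxim}$, automorphisms, and functions $\psi_\sigma^{\maxim}$ with $\mu_{\sY{\maxim}}^{\sigma}(g) = \psi_\sigma^{\maxim}$ (here $\mu^{\sigma}$ on the right of the asserted identity is understood at level $\maxim$).

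The core of the argument is then to show that applying $\sarc_\fat$ to the level-$\maxim$ integrability data of $g$ yields admissible integrability data for $\varphi$, with $\psi_\sigma = \sarc_\fat \psi_\sigma^{\maxim}$. For this I would use that $\sarc_\fat$ is functorial and preserves products, as recalled in \S\ref{2}: it therefore carries admissible opens to admissible opens, sends the level-$\maxim$ cover to a cover of $\sY{}$, and commutes with restriction, with the structure morphism $j$, and hence with the pullbacks $j^{*}$ and the fibrewise automorphisms $\gamma_\sigma$. Applying $\sarc_\fat$ to the finite-level identity $g|_{\sZ{\maxim,\sigma}} = (\gamma_\sigma^{\maxim})^{*} j^{*}\psi_\sigma^{\maxim}$ then produces $\varphi|_{\sZ{\sigma}} = \gamma_\sigma^{*} j^{*}(\sarc_\fat \psi_\sigma^{\maxim})$, which exhibits $\sarc_\fat \psi_\sigma^{\maxim}$ as a legitimate choice of $\psi_\sigma$. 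By the very definition of the measure map this gives $\mu_{\sY{}}^{\sigma}(\varphi) = \sarc_\fat \psi_\sigma^{\maxim} = \sarc_\fat \mu_{\sY{\maxim}}^{\sigma}(g)$, as claimed.

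I expect the main obstacle to lie in verifying that $\sarc_\fat$ commutes with \emph{every} ingredient of the integrability data rather than merely with the underlying products: one must check that admissibility of the opens $\sZ{\sigma}$ is preserved, that the fibrewise automorphisms at the limit level are genuinely the images of their finite-level counterparts, and---via condition (4) in the definition of integrability---that the $\sigma$-integrability of $\psi_\bullet$ is inherited from that of $\psi_\bullet^{\maxim}$. It is these compatibilities, and not the final bookkeeping, that make the lifted data honest integrability data rather than a formal pullback; once they are in hand the stated equality is immediate.
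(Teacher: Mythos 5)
Your proposal is correct and follows essentially the same route as the paper, which disposes of the statement in one line by invoking Proposition \ref{clim} together with the compatibility results of \S 6 of \cite{me3}; your expansion of the second ingredient (that $\sarc_{\fat}$ carries the finite-level integrability data of $g$ to admissible integrability data for $\varphi$, so that the measure maps commute with $\sarc_{\fat}$) is precisely what the paper delegates to that citation. The obstacles you flag at the end are the content of the cited work rather than a gap in your argument.
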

\begin{proof}
This follows immediately from the work of \S 6 of \cite{me3} and from Proposition \ref{clim}.
\end{proof}

\subsection{Sheafificaton and motivic measures}

Philosophically speaking, there is no reason to avoid applying the sheafification functor to the presheaf of total functions as this 
functor is the left adjoint to the forgetful functor. Thus, we will denote by $\sC{\sX{}}{a}{-}$ the sheafification of the 
presheaf which sends an admissible open $\mathcal{U}$ of $\sX{}$ to
 $\sC{}{}{j(\mathcal{U}),\mot_{\sX{}}|_{\mathcal{U}}}$ for 
some fixed motivic site $\mot_{\sX{}}$. Likewise, if $\sX{}$ is a limit $\sY{}$-sieve, then we will denote by $I_{\sY{}}\sC{\sX{}}
{a}{-}$ the subsheaf of
$\sC{\sX{}}{a}{-}$ obtained by sheafifying  the presheaf which sends an admissible open $\mathcal{U}$ of $\sX{}$ to $I_{\sY{}}\sC{}{}{\mathcal{U},\mot_{\sX{}}|_{\mathcal{U}}}$.

Of course, there are many advantages to working with sheaves instead of just presheaves. For example, we have the following canonical isomorphism of rings 
\begin{equation}
\sC{\sY{}}{a}{\sY{}} \cong \prod_{\sigma} \sC{\sY{}}{a}{\sY{\sigma}} 
\end{equation}
when $\sY{}$ decomposes into the disjoint union $\sqcup_i\sY{\sigma}$ for any index $\sigma$.

Now consider a function in $\varphi \in SI_{\sY{}}\sC{\sX{}}
{a}{\sX{}}$. We define $\mu_{\sY{}}(\varphi) \in \sC{\sY{}}
{a}{\sY{}}$ to be the unique element corresponding to $(\mu_{\sY{}}^{\sigma}(\varphi))_{\sigma}$ under the isomorphism above. 
In fact, doing this for each admissible open, we can easily see that we have defined a morphism of sheaves on $\sX\ $
\begin{equation}
\mu_{\sY{}}: SI_{\sY{}}\sC{\sX{}}
{a}{-}\to j^{-1}\sC{\sY{}}
{a}{-}  \ ,
\end{equation}
where $j$ is the structural morphism.
However, the reason we made the distinction between strongly integrable and integrable is to observe that this argument works just as well for any integrable function because of the exact sequence which defines a sheaf. In other words, if $\varphi \in I_{\sY{}}\sC{\sX{}}
{a}{\sX{}}$, then this also uniquely defines an element $\mu_{\sY{}}^{\sigma}(\varphi)$ for each element of the cover $\sY{\sigma}$. It is easy to check that these functions agree on the intersections $\sY{\sigma} \cap \sY{\sigma'}$. Thus, we may glue to obtain a unique function $\mu_{\sY{}}(\varphi) \in \sC{\sY{}}
{a}{\sY{}}$. Doing this for each admissible open, we can easily see that we have defined a morphism of sheaves on $\sX\ $
\begin{equation}
\mu_{\sY{}}: I_{\sY{}}\sC{\sX{}}
{a}{-}\to j^{-1}\sC{\sY{}}
{a}{-}  \ ,
\end{equation}
where $j$ is the structural morphism.

Now from this and Theorem 4.1, it is clear that given a continuous morphism $f: \sX\ \to \sX{}^{\prime}$ of limit $\sY{}$-sieves, we have a commutative diagram 
\[\xymatrix{ & I_{\sY{}}\sC{\sX{}}
{a}{-}  \ar[rd]_{\mu_{\sY{}}}\ar[r]^{f^{\#}} &f^{-1}  I_{\sY{}}\sC{\sX{}^{\prime}}{a}{-}\ar[d]^{\mu_{\sY{}}} \\
& &j^{-1}\sC{\sY{}}{a}{-} } \]
in the topos of sheaves of rings on $\sX{}$.
This commutative diagram can be interpreted as a change of variables formula for schemic motivic integration.

\section{The main theorem}

We present here a schemic analogue to Theorem 10.1.1 of \cite{CL}.

\begin{theorem}
Let $\sY{}$ be an element of $\limsieves\fld$. Every  $\sX{} \in \limsieves{\sY{}}$ can be endowed with a sheaf of rings $I_{\sY{}}\sC{\sX{}}{a}{-}$  with the following properties

\begin{enumerate}
\item Existence of the four functors:
\begin{enumerate}

\item Every continuous morphism $f : \sX{} \to \sX{}^{\prime}$ in  $\sieves{\sY{}}$ induces a morphism of sheaves $f_{\#}$ from 
$I_{\sY{}}\sC{\sX{}}{a}{-}$ to $f^{-1}I_{\sY{}}\sC{\sX{}^{\prime}}{a}{-}$ which is the restriction of the pushforward from 
$\sC{\sX{}}{a}{-}$ to $f^{-1}\sC{\sX{}}{a}{-}$.

\item The analogue of (a) is true for $f^{\#}$, $f_{!}$ and $f^{!}$ (when it exists).
\end{enumerate}
\item Functoriality:
\begin{enumerate}
\item Let $\lambda : \sY{}\to \sY{}^{\prime}$ be a morphism in $\limsieves\fld $. This induces a morphism $\lambda_{+}: \limsieves{\sY{}}\to \limsieves{\sY{}^{\prime}}$ as shown in \cite{me2}. There is a natural inclusion of sheaves $I_{\sY{}^{\prime}}\sC{\lambda(\sX{})}{a}{\lambda_{+}(-)}\subset I_{\sY{}}\sC{\sX{}}{a}{-}$.

\item Given a continuous morphism $f: \sX\ \to \sX{}^{\prime}$ of limit $\sY{}$-sieves, we have a commutative diagram 
\[\xymatrix{ & I_{\sY{}}\sC{\sX{}}
{a}{-}  \ar[rd]_{\mu_{\sY{}}}\ar[r]^{f^{\#}} &f^{-1}  I_{\sY{}}\sC{\sX{}^{\prime}}{a}{-}\ar[d]^{\mu_{\sY{}}} \\
& &j^{-1}\sC{\sY{}}{a}{-} } \]
in the category of sheaves of rings on $\sX{}$.
\end{enumerate}

\item  Integrability:
\begin{enumerate}

\item $I_{\sY{}}\sC{\sX{}}{a}{-}$ is a subsheaf of $\sC{\sX{}}{a}{-}$.

\item $I_{\sY{}}\sC{\sY{}}{a}{-}$ is isomorphic to $\sC{\sY{}}{a}{-}$.
\end{enumerate}

\item Additivity:
If $\sX{} $ is the disjoint union of admissible open subsieves $\sX{i}$ for some index $i$, then the isomorphism 
$\sC{\sX{}}{a}{-} \cong \sC{\sX{i}}{a}{-}$ induces an isomorphism 
 $I_{\sY{}}\sC{\sX{}}{a}{-} \cong \prod_i I_{\sY{}}\sC{\sX{i}}{a}{-}$.

\item Projection Formula:
Given a continuous morphism $f: \sX\ \to \sX{}^{\prime}$ of limit $\sY{}$-sieves, an admissible open $\mathcal{U}$, an element $x \in f^{-1}\sC{\sX{}^{\prime}}{a}{\mathcal{U}}$, and an element $y \in I_{\sY{}}\sC{\sY{}}{a}{\mathcal{U}}$, then 
$xf_{\#}(y)$ is an element of $ f^{-1}I_{\sY{}}\sC{\sX{}^{\prime}}{a}{\mathcal{U}}$ if and only if $f^{\#}(x)y$ is an element of $I_{\sY{}}\sC{\sX{}}{a}{\mathcal{U}}$. If these conditions are satisfied, then $f_{\#}(f^{\#}(x)y) =xf_{\#}(y)$.

\item Inclusions: Let $f : \sX{} \to \sX{}^{\prime}$  be continuous and injective and let $\mathcal{U}$ be an admissible open of 
$\sX{}$. Then, $\varphi$ belongs to $I_{\sY{}}\sC{\sX{}}{a}{\mathcal{U}}$ if and only if $f_{\#}(\varphi)$ belongs to $f^{-1}I_{\sY{}}\sC{\sX{}^{\prime}}{a}{\mathcal{U}}$.

\item Projection along $\fld$-variables:
Consider the morphism $p : \sX{} \to \sX{}^{\prime}$ where $\sX{} = \sX{}^{\prime}\times \bA_{\fld}^{m}$ and $p$ is the projection onto the first factor and $\sX{}^{\prime}$ a limit $\sY{}$-sieve. Then, for any admissible open $\mathcal{U}$  of $\sX{}$ such that $p(\mathcal{U})$ is an admissible open of $\sX{}^{\prime}$,
$\varphi$ belongs to $I_{\sY{}}\sC{\sX{}}{a}{\mathcal{U}}$ if and only if $p_{\#}(\varphi)$ belongs to $I_{\sY{}}\sC{\sX{}^{\prime}}{a}{p(\mathcal{U})}$.

\item Projection along $\nat$-variables:
Consider the indexed family of sieves $\sX{i} = \sY{}$ where $i\in \nat$ and set $\sX{}=\sqcup_i \sX{i}$. For any admissible open $\mathcal{U}$ of the form $\sqcup_i\mathcal{V}$ with $\mathcal{V}$  an admissible open of $\sY{}$, 
$\varphi$ belongs to $I_{\sY{}}\sC{\sX{}}{a}{\mathcal{U}}$ if and only if it is $i$-integrable over $\mathcal{V}$ in the sense of \cite{me3} for all $i$.

\item Strongly integrable functions:
The Statements 1-8 hold for strongly integrable functions as well.
\end{enumerate}

 \end{theorem}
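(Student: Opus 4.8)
The plan is to verify the nine properties in turn, since the statement is an omnibus package whose ingredients were assembled in Sections 2--4; most items follow directly from results already proven, and the genuine work lies in checking that the integrability condition of Section 4 interacts correctly with each construction. A recurring tool will be Proposition \ref{clim}: since $I_{\sY{}}\sC{}{}{\sX{},\mot_{\sX{}}}=\injlim I_{\sY{\maxim}}\sC{}{}{\sX{\maxim},\mot_{\sX{\maxim}}}$, a claim about limit sieves can often be tested at each finite level $\maxim$ of the point system, where the machinery of \cite{me2} and \cite{me3} applies verbatim, and then propagated to the limit. For the existence of the four functors (1), the pushforward $f_{\#}$ on $\sC{\sX{}}{a}{-}$ was built in Section 2, and Theorem \ref{inducedmor} with the Remark following it shows that it restricts to a morphism of sheaves on the integrable subsheaves; the pullback $f^{\#}$ was constructed in the projection-formula subsection of Section 2 and restricts by the same argument applied to the topological rings $\sT{}{q}{-}$ under the evaluation homomorphisms $\nu_q$. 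The functor $f_{!}$ is the pushforward-with-compact-support of Theorem \ref{projform}, and $f^{!}$ is its right adjoint when that adjoint exists; their restriction statements follow from those for $f_{\#}$ and $f^{\#}$ via the adjunctions.

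Items (2)--(4) are bookkeeping. For (2), part (a) is the compatibility of the $\sY{}$-integrability condition with base change along the $\lambda_{+}$ of \cite{me2}, obtained by unwinding the covering conditions of Section 4 so that an $\sY{}^{\prime}$-integrable section is recognized as $\sY{}$-integrable, giving the inclusion of subsheaves; part (b) is exactly the change-of-variables diagram proven at the end of Section 4. For (3), part (a) holds by construction, as $I_{\sY{}}\sC{\sX{}}{a}{-}$ was defined by sheafifying a sub-presheaf of $\sC{\sX{}}{a}{-}$; part (b) is the degenerate case $\sX{}=\sY{}$, where the structure morphism $j$ is the identity, so the one-element cover with identity automorphism and $\psi=\varphi$ witnesses integrability of every section, upgrading the inclusion of (a) to an isomorphism. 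For (4), the disjoint-union isomorphism $\sC{\sX{}}{a}{-}\cong\prod_i\sC{\sX{i}}{a}{-}$ recorded in Section 4 restricts to the integrable subsheaves because condition (4) of the definition of integrability is tested componentwise on a disjoint decomposition.

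The substantive cases are (5)--(8). The projection formula (5) combines the equality $f_{\#}(f^{\#}(x)y)=xf_{\#}(y)$ of Theorem \ref{projform} with the $f_{\#}$--$f^{\#}$ adjunction to yield the two-sided membership equivalence. The inclusions (6) require, for injective $f$, that integrability of $f_{\#}(\varphi)$ force integrability of $\varphi$; here injectivity lets one transport a cover of $\sX{}^{\prime}$ witnessing integrability of $f_{\#}(\varphi)$ back to a cover of $\sX{}$ witnessing that of $\varphi$. The projection along $\fld$-variables (7) is the geometric heart, treated in Section 2.1: the isomorphism of motivic sites $\mot_{\sX{}}\cong p(\mot_{\sX{}})$ and the resulting surjection $p_{\#}$ guarantee that integrating out the $\bA_{\fld}^{m}$-factor both preserves and detects integrability. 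The projection along $\nat$-variables (8) is then essentially definitional: over a disjoint indexed union, condition (4) for $\sY{}$-integrability coincides term-by-term with $i$-integrability over $\mathcal{V}$ in the sense of \cite{me3}.

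The main obstacle will be the reverse implications in (6) and (7): that integrability of the image under $f_{\#}$, respectively $p_{\#}$, forces integrability of the original function. The forward directions are immediate from Theorem \ref{inducedmor}, but each converse requires reconstructing a witnessing cover upstream from one downstream --- resting on injectivity of $f$ in (6) and on the isomorphism of motivic sites in (7). Once these are secured, (9) follows by observing that every construction used above preserves disjointness of the witnessing covers, so that the entire package (1)--(8) specializes verbatim to strongly integrable functions.
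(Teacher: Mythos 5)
Your proposal is correct and follows essentially the same route as the paper's own proof: both treat the theorem as an omnibus assembly of results already established (Theorem \ref{inducedmor} and the evaluation maps $\nu_q$ for the four functors, the \S 4 diagram for functoriality, the trivial cover for integrability of $\sC{\sY{}}{a}{-}$, the sheaf exact sequence for additivity, Theorem \ref{projform} for the projection formula, and the \S 2 analysis of projection along $\fld$-variables). Your write-up is somewhat more explicit than the paper's — notably in flagging the reverse implications of items (6) and (7) and in invoking Proposition \ref{clim} as a reduction to finite levels — but these are elaborations of, not departures from, the argument the paper gives.
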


\begin{proof}
Much of what is stated above has already been displayed in this paper. However, we list it in this way to make it clear which statements correspond to which in  Theorem 10.1.1 of \cite{CL}.
\begin{enumerate}
\item \textit{Proof of statement 1}:  We proved part (a) in \S 4.7. Note that as $f_!$ is a subfunctor of $f_{\#}$, the analogue for $f_!$ is immediate. The proof of the analogous statement for the pullback $f^{\#}$ follows along the same lines as the proof for Theorem \ref{inducedmor}.  The statement for $f^{!}$ follows from the adjunction formula.

\item\textit{Proof of statement 2}: We proved part (b) in \S 4.7. For part (a), note that $\lambda_{+}$ is the functor induced by 
inclusion -- i.e., if $\sX{}$ is a $\sY{}$-sieve given by inclusion $i : \sX{} \into \sY{}\times\bA_{\fld}^{m}$, then
 $\lambda_{+}(\sX{}) = \mbox{im}((\lambda\times \mbox{id}_{\bA_{\fld}^{m}})\circ i)$. Note that every admissible open subset of $\lambda_{+}(\sX{})$ is of the form $\lambda_{+}(\mathcal{U})$ where $\mathcal{U}$ is an admissible open subset of $\sX{}$. The result follows.

\item\textit{Proof of statement 3}: Part (a) is immediate and part (b) follows by taking the trivial cover.

\item\textit{Proof of statement 4}: Follows immediate from the exact sequence defining a sheaf.

\item \textit{Proof of statement 5}: We proved the general projection formula in Theorem \ref{projform}. The result follows from statement 1 and statement 2(b).

\item\textit{Proof of statement 6}: One direction is immediate. The other follows from the definitions.

\item\textit{Proof of statement 7}: We proved this in \S 2.

\item\textit{Proof of statement 8}: This follows immediately by definition.

\item\textit{Proof of statement 9}: Immediate.

\end{enumerate}
\end{proof}

\begin{remark}
As we do not have the notion of cell-decomposition (instead choosing to work with general open coverings), it is unclear if there are analogues to statements A7 and A8 of Theorem 10.1.1 of \cite{CL}.
\end{remark}

\end{document}